\theoremstyle{plain}
\newtheorem{thm}{Theorem}[section]
\newtheorem{prop}[thm]{Proposition}
\newtheorem{rem}[thm]{Remark}
\newtheorem*{prob*}{Problem}
\theoremstyle{definition}
\newcommand{\cV}{\ensuremath{\mathcal{V}}}
\newcommand{\sm}{\ensuremath{\smallsetminus}}
\newcommand{\Aut}{\textnormal{Aut}}
\newcommand{\sub}{\subseteq}
\def\td{tree-decom\-po\-si\-tion}
\newcommand{\comment}[1]{}
\newcommand{\nat}{{\mathbb N}}
\newcommand{\real}{{\mathbb R}}
\newcommand{\BF}{\ensuremath{\mathcal B}}
\newcommand{\CF}{\ensuremath{\mathcal C}}
\newcommand{\DF}{\ensuremath{\mathcal D}}
\newcommand{\EF}{\ensuremath{\mathcal E}}
\newcommand{\HF}{\ensuremath{\mathcal H}}
\def\?#1{\vadjust{\vbox to 0pt{\vss\vskip-8pt\leftline{%
     \llap{\hbox{\vbox{\pretolerance=-1
     \doublehyphendemerits=0\finalhyphendemerits=0
     \hsize16truemm\tolerance=10000\small
     \lineskip=0pt\lineskiplimit=0pt
     \rightskip=0pt plus16truemm\baselineskip8pt\noindent
     \hskip0pt        
     #1\endgraf}\hskip7truemm}}}\vss}}}
\newenvironment{txteq*}
  {
    \begin{equation*}
    \begin{minipage}[c]{0.85\textwidth} 
    \em                                
  }
  {\end{minipage}\end{equation*}\ignorespacesafterend}
\begin{document}

\title{Generating the cycle space of planar graphs}
\author{Matthias Hamann}
\date{}
\maketitle

\begin{abstract}
We prove that the cycle space of every planar finitely separable $3$-connected graph $G$ is generated by some $\Aut(G)$-invariant nested set of cycles.
We also discuss the situation in the case of smaller connectivity.
\end{abstract}

\section{Introduction}\label{sec_Intro}

Dicks and Dunwoody~\cite{DiDu-GroupsGraphs} showed that the cut space of every connected graph is generated by some nested set of cuts that is invariant under the automorphisms of the graph, where a set of cuts is \emph{nested} if no cut separates any two edges of any other cut.
Recently, Dunwoody~\cite{D-Networks} strengthened this by showing that there is also a \emph{canonical} such set.
This means roughly that no choices were made during the construction of this set or, more precisely, that the construction of the set commutes with graph isomorphisms.

In this note, we present a similar result for the cycle space of a planar graph.
Roughly speaking, two cycles in a planar graph are \emph{nested} if their embeddings in the plane do not cross (see Section~\ref{sec_def} for a precise definition).
Similarly as above, a set of cycles is \emph{canonical} if one of its constructions commutes with graph isomorphisms.\footnote{Note that for any graph $G$ every canonical set of cuts and every canonical set of cycles is invariant under the automorphisms of~$G$.}
We call a graph \emph{finitely separable} if
\begin{equation}\label{eq_FinSep}
\begin{minipage}[t]{0.9\textwidth}
\em
there are only finitely many pairwise internally disjoint paths between any two vertices.
\end{minipage}
\end{equation}
With these definitions, we are able to state our main theorem:

\begin{thm}\label{thm_main}
Every planar finitely separable $3$-connected graph $G$ has a canonical nested set of cycles that generates its cycles space.
\end{thm}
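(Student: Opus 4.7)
The plan is to take the set $\mc F$ of face boundaries of a planar embedding of $G$ as the desired canonical, nested, cycle-space-generating set. The argument splits naturally into three essentially independent checks: canonicity, nestedness, and generation of the cycle space.

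For canonicity I would invoke a Whitney-type uniqueness theorem for planar embeddings of $3$-connected graphs: up to reflection, the embedding of $G$ in the sphere is determined by $G$ alone, so the combinatorial face structure depends only on~$G$. The hypothesis~\eqref{eq_FinSep} of finite separability is precisely what makes such a uniqueness statement available in the infinite setting, since without it one loses control over the local rotation systems at vertices. Uniqueness then guarantees that any graph isomorphism $G \to G'$ sends face boundaries to face boundaries, so the construction of $\mc F$ commutes with isomorphisms and, in particular, $\mc F$ is $\Aut(G)$-invariant.

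For nestedness I would argue that two distinct face boundaries cannot cross in the sense to be introduced in Section~\ref{sec_def}: they bound disjoint open faces of the sphere, and were the boundary cycle of a face $F_1$ to separate two edges of the boundary cycle of a distinct face $F_2$, the open region $F_2$ would have to meet both sides of the Jordan curve formed by the boundary of $F_1$, contradicting connectedness of $F_2$.

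For generation, the finite case is MacLane's planarity theorem. In the infinite case I would use finite separability to show that for every cycle $C$ of $G$ at least one of the two sides of $C$ in the embedding meets only finitely many faces; the symmetric difference of the boundaries of those finitely many faces is then a finite sum equal to $C$. This finiteness claim is where I expect the main obstacle to lie, and it is the step in which the hypothesis~\eqref{eq_FinSep} does the real work, ultimately reducing the infinite problem to a finite application of MacLane's theorem inside a suitable finite planar subgraph containing the cycle $C$ together with the faces on its bounded side.
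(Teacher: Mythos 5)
Your route---face boundaries plus a MacLane-type generation argument---is genuinely different from the paper's, which never works with faces directly: the paper passes to the unique dual $G^*$ (Theorem~\ref{thm_UniqueDual}), applies Dunwoody's Theorem~\ref{thm_DD} to get canonical nested sets of tight cuts in $\mathcal{B}(G^*)$, and pulls these back to cycles of $G$ via the edge bijection, with Proposition~\ref{prop_NestedInDual} supplying nestedness. The paper in fact explicitly confines your approach to finite and VAP-free graphs (see the footnote following Theorem~\ref{thm_main}), and that restriction marks exactly where your argument has a gap. Your canonicity and nestedness steps are fine (though note that Whitney/Imrich, Theorem~\ref{thm_Whitney}, needs only $3$-connectedness; finite separability plays no role there---in the paper it is used to guarantee the existence of a unique dual, not uniqueness of the embedding). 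The generation step, however, is incomplete in two ways. First, in an infinite planar graph a face boundary need not be a cycle at all: already for the half-grid $\Z\times\Z_{\geq 0}$ one face is bounded by a double ray, and in graphs with dominated ends a face boundary can be an infinite circle of $\tilde G$. So your set $\mathcal{F}$ is not a set of cycles until you discard the infinite face boundaries, after which generation is precisely the point at issue.

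Second, and more seriously, your central claim---that every cycle $C$ has a side meeting only finitely many faces, each bounded by a finite cycle---is exactly the statement that is immediate for VAP-free embeddings (a bounded region free of accumulation points contains only finitely many vertices) and is unsupported beyond that case. Finite separability does not give you a VAP-free embedding: the half-grid with an additional apex vertex joined to every vertex of its boundary row is planar, $3$-connected and finitely separable, yet has a vertex of infinite degree and a dominated end, so every embedding has a vertex accumulation point. You offer no mechanism by which hypothesis~\eqref{eq_FinSep} controls the number of faces on a side of $C$, and you acknowledge this is where the difficulty lies; as written, the proof stops exactly at its essential step. This is not a cosmetic omission: if both sides of a cycle meet infinitely many faces (or a face with infinite boundary sits on the ``finite'' side), the cycle is not a finite sum of finite face boundaries at all, so the claim must be proved, not merely expected. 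The paper's detour through $G^*$ and the cut space should be read as evidence that closing this gap directly is nontrivial; if you want to salvage the face-boundary approach, the finiteness claim is the theorem you actually need to prove.
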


Note that Theorem~\ref{thm_main} is rather trivial for finite graphs: the face boundaries form a nested set of cycles that generates the cycle space and is canonical, as  Whitney~\cite{whitney_congruent_1932} proved that $3$-connected graphs have a unique embedding in the plane (cf.\ Theorem~\ref{thm_Whitney}).\footnote{The same is proof holds for \emph{VAP-free} planar graphs.
These are graphs that admit an embedding in the plane without any accumulation point of the vertices.}

Theorem~\ref{thm_main} can be seen as a dual version of Dunwoody's theorem~\cite{D-Networks}, and indeed, our strategy to prove our main theorem is to deduce it from the fact that Dunwoody's theorem holds for its dual graph.
Of course, it is necessary to have a (unique) dual graph.
Duals of infinite planar graphs are not straightforward analogues of the finite ones, see Thomassen~\cite{T-Duality}.
But Bruhn and Diestel~\cite{BD-Duality} showed that the additional assumption that the graphs are finitely separable implies the existence of unique duals in the $3$-connected case.

Theorem~\ref{thm_main} fails for graphs that have lower connectivity~-- regardless which embedding in the plane is considered:
take a graph~$G$ consisting of two vertices $x,y$ and four distinct paths of length~$2$ between them.
Obviously, this graph is planar.
As all cycles in~$G$, each of which has length~$4$, lie in the same $\Aut(G)$-orbit, all of them must be chosen in any canonical set that generates the cycle space.
But it is easy to find two crossing cycles in~$G$.

Even though Theorem~\ref{thm_main} fails for graphs of lower connectivity, it is possible to extend the graph canonically to a graph~$G'$ with equal connectivity such that $G'$ has a canonical nested set of cycles generating its cycle space.
We will discuss this in Section~\ref{sec_LowerCon}.

It is not clear whether Theorem~\ref{thm_main} remains true if we omit the assumption of finite separability:

\begin{prob*}
Does every planar $3$-connected graph $G$ have a canonical nested set of cycles that generates its cycles space?
\end{prob*}

Our main theorem has applications for infinite graphs and infinite groups: in~\cite{H-Accessibility}, the author uses it to show that locally finite quasi-transitive planar graphs are accessible, and in~\cite{GH-PlanarGroups}, Georgakopoulos and the author apply it to obtain planar presentations for planar groups.
(\emph{Planar presentations} of planar groups are presentations that directly tell that the group has a planar Cayley graph.)

\section{Preliminaries}\label{sec_def}

Throughout the paper, a graph may have loops and multiple edges (what is usually called a multigraph).\footnote{Note that all cited results that we need for our main result are valid for this general notion of graphs. Most of them are stated in this way, e.g.\ the result of Dicks and Dunwoody. The only result not stated in this way is Theorem~\ref{thm_DD}, but its proof can easily be adapted to multigraphs. (Note that its forerunner~\cite{DiDu-GroupsGraphs} is stated for multigraphs.)}
Let $G$ be a graph.
A one-way infinite path is a \emph{ray}. Two rays are \emph{equivalent} if they lie eventually in the same component of $G-S$ for every finite $S\sub V(G)$.
This is an equivalence relation whose equivalence classes are the \emph{ends} of~$G$.
By $\Omega(G)$ we denote the set of ends of~$G$.

Now we define a basis of a topology on $G\cup\Omega(G)$.
In order to do so, we view every edge as an isometric copy of the unit interval.
For every $v\in V(G)$ and every $n\in\nat$, the set of all points on edges incident with~$v$ that have distance less than $1/n$ to~$v$ are open.
For every end $\omega$ and every finite vertex set~$S$, let $C(S,\omega)$ be the component of $G-S$ that contains all rays in~$\omega$ eventually and let $\hat{C}(S,\omega)$ be $C(S,\omega)$ together with all ends of~$G$ that contains rays of $C(S,\omega)$.
The set $\hat{C}(S,\omega)$ together with the interior points of the edges between $S$ and $C(S,\omega)$ is open.
We denote by $|G|$ the topological space on $G\cup\Omega(G)$ defined by all these open sets.

A vertex $v$ \emph{dominates} an end $\omega$ if for some ray~$R$ in~$\omega$ there are infinitely many $v$--$R$ paths in~$G$ that pairwise meet only in~$v$.
We call the end~$\omega$ \emph{dominated}.
We define $\tilde G$ to be the quotient space obtained from $|G|$ by identifying every dominated end with its dominating vertices.
Note that for finitely separable graphs every end is identified with at most one vertex and that $\tilde G$ and $|G|$ coincide if~$G$ is locally finite.

A \emph{circle} is a homeomorphic copy of the unit circle in~$\tilde G$ and its \emph{circuit} is the set of edges that it contains.
The \emph{sum} (over $\mathbb F_2$) of finitely many cycles $(C_i)_{i\in I}$ in~$G$ is the set of those edges that occur in an odd number of~$C_i$.
The set of all sums of cycles forms a vector space over $\mathbb F_2$, the \emph{cycle space} $\CF(G)$ of~$G$.
For every $n\in\nat$, let $\CF_n(G)$ be the subspace of the cycle space~$\CF$ that is generated by the cycles of length at most~$n$.
So we have $\CF(G)=\bigcup_{n\in\nat}\CF_n(G)$.
The automorphisms of~$G$ act canonically on~$\CF(G)$.

A \emph{cut} is a subset $B$ of $E(G)$ that is, for some bipartition $\{X,Y\}$ of~$V(G)$, the set of all those edges that have one of its incident vertices in~$X$ and the other in~$Y$.
We call two cuts $B_1$ and $B_2$ \emph{nested} if for bipartition $\{X_1,Y_1\}$ and $\{X_2,Y_2\}$ that correspond to $B_1$ and~$B_2$, respectively, one of the following holds:
\[
X_1\sub X_2,\qquad X_1\sub Y_1,\qquad X_1\supseteq X_2,\text{ or}\qquad X_1\supseteq Y_1.
\]
A cut $B$ is \emph{tight} if $G-B$ consists of two components.
The \emph{sum} (over $\mathbb F_2$) of finitely many cuts in~$G$ is the set of those edges that occur in an odd number of these cuts.
The set of all sums of finite cuts form a vector space over $\mathbb F_2$, the \emph{cut space} $\BF(G)$ of~$G$.
For $n\in\nat$ let $\BF_n(G)$ be the subspace of $\BF(G)$ induced by the tight cuts of size at most~$n$.
So we have $\BF(G)=\bigcup_{n\in\nat}\BF_n(G)$.
Note that no element of~$\BF(G)$ is infinite and that $\Aut(G)$ acts canonically on~$\BF(G)$.

\medskip

Let $G$ and $G^*$ be graphs such that $G$ is finitely separable and let
\[
^*\colon E(G)\to E(G^*)
\]
be a bijection.
We call $G^*$ the \emph{dual} of~$G$ if for every (finite or infinite) $F\sub E(G)$ the following holds:
\begin{equation}\label{eq_Dual}
\begin{minipage}[t]{0.9\textwidth}
\em
$F$ is a circuit in~$G$ if and only if $\{f^*\mid f\in F\}$ is a tight non-empty cut.
\end{minipage}
\end{equation}

\begin{thm}{\cite[Corollary 3.5]{BD-Duality}}\label{thm_UniqueDual}
Every planar finitely separable $3$-connected graph has a unique dual.\qed
\end{thm}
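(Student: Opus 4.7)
The plan is to establish existence and uniqueness of the dual separately, both ultimately resting on the topological framework of $\tilde G$.

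For existence, I would fix a planar embedding of $G$ and define $G^*$ to have vertex set equal to the set of faces, with a dual edge $e^*$ for each $e\in E(G)$ joining the two faces incident with $e$ (with a loop if both sides of $e$ border the same face). Condition~(\ref{eq_Dual}) then has to be checked in both directions. For the forward direction, a circle in $\tilde G$ with circuit $F$ should separate $\tilde G$ into two arcwise connected regions (a Jordan-curve argument inside $\tilde G$); the faces of~$G$ partition into two sets $X,Y$ according to which region they lie in, and the edges crossing the circle are exactly those whose duals join $X$ to $Y$, producing a tight cut in $G^*$ (tightness corresponds to connectedness of each side). For the reverse direction, given a non-empty tight cut $\{e^*\mid e\in F\}$ of $G^*$, one has two face classes whose common topological boundary in $\tilde G$ is a union of arcs glued along $F$; 3-connectedness together with finite separability forces this boundary to be a single circle.

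For uniqueness, I would invoke Whitney's embedding theorem (Theorem~\ref{thm_Whitney}), which says that a planar 3-connected graph admits essentially only one embedding. Consequently its face set, and with it the combinatorial structure of $G^*$, is determined by $G$ alone, so any two duals must agree via the canonical bijection on edges.

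The hard part is making the topology in $\tilde G$ do the right work. Without finite separability, face boundaries can behave pathologically (this is exactly the phenomenon Thomassen exploited to show duality fails in general), and two circles in $\tilde G$ sharing a dominated end can obstruct the Jordan-style separation. Condition~(\ref{eq_FinSep}) is what lets one identify each dominated end with at most one vertex in $\tilde G$ and thereby guarantees that circles genuinely separate $\tilde G$ into two arcwise connected regions. Checking this cleanly, and verifying the reverse direction of~(\ref{eq_Dual}) for infinite tight cuts, is where the technical substance lies; the 3-connectivity hypothesis is then used both to invoke Whitney and to rule out that a tight cut corresponds to anything other than a single circle.
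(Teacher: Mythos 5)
First, note that the paper offers no proof of this statement: it is quoted verbatim from Bruhn and Diestel \cite[Corollary 3.5]{BD-Duality} and used as a black box, so there is no internal argument to compare yours against. Judged on its own terms, your existence sketch follows the same general route as Bruhn and Diestel: embed $G$, take the face graph as the candidate dual, and verify condition~(\ref{eq_Dual}) by Jordan-curve arguments for circles in $\tilde G$ (which, for finitely separable planar graphs, embeds in the sphere). The genuinely hard steps~--- that circles in $\tilde G$ separate the sphere into two regions, that infinite circuits correspond to infinite tight cuts, and the converse direction for arbitrary tight cuts~--- are exactly the ones you defer, so this half is an accurate outline rather than a proof; that is defensible for a citation-level statement, but it is where almost all of the work sits.

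The uniqueness argument, however, has a real gap. A dual in the sense of the paper is \emph{any} abstract graph $G^*$ together with a bijection $^*\colon E(G)\to E(G^*)$ satisfying~(\ref{eq_Dual}); it is not defined as the geometric dual of an embedding. Theorem~\ref{thm_Whitney} tells you that the embedding of $G$, hence its face structure, hence the \emph{geometric} dual, is essentially unique~--- but it does not tell you that every graph satisfying~(\ref{eq_Dual}) is isomorphic to that geometric dual. What you actually know about an arbitrary dual $G^*$ is its collection of tight cuts (the images under $^*$ of the circuits of $G$), so the missing step is that a finitely separable graph is determined up to isomorphism by its set of tight cuts once it is $3$-connected (and $G^*$ is $3$-connected by Thomassen's theorem quoted in the paper). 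This is an infinite analogue of Whitney's $2$-isomorphism theorem, and it is where the substance of the uniqueness proof lies; invoking Whitney's \emph{embedding} theorem for $G$ does not substitute for it.
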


\begin{thm}{\cite[Theorem 4.5]{T-Duality}}
Let $G$ and $G^*$ be dual graphs.
Then $G$ is $3$-connec\-ted if and only if $G^*$ is $3$-connected.\qed
\end{thm}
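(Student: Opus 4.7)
The plan is to argue the contrapositive and conclude the other direction by the symmetry of duality: I want to show that if $G^*$ has a vertex-separator $S^*$ of size at most~$2$, then $G$ also does. Take $S^*=\{u^*,v^*\}$ with $V(G^*)\setminus S^*$ partitioned into nonempty $A^*,B^*$ having no $A^*$--$B^*$ edges; the cases $|S^*|\le 1$ are analogous and easier. This separation induces an edge partition $E(G^*)=E_A^*\cup E_B^*$ (edges with an endpoint in $A^*$ go to $E_A^*$, those with an endpoint in $B^*$ go to $E_B^*$, and the $u^*v^*$-edges are assigned arbitrarily), and via the bijection~$^*$ a partition $E(G)=E_A\cup E_B$. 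The candidate separator of $G$ will be
\[
S:=\{x\in V(G): x \text{ is incident to edges in both }E_A \text{ and }E_B\}.
\]

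Using \eqref{eq_Dual}, for each non-cut-vertex $x$ of $G$ the tight cut $\delta(x)$ dualises to a cycle $D_x$ in $G^*$. (We may assume $G$ is $2$-connected, otherwise $G$ has a cut-vertex and we are already done.) If $x\in S$ then $D_x$ contains edges of both $E_A^*$ and $E_B^*$, so as a cycle in $G^*$ it must cross between the $A^*$-side and the $B^*$-side, which forces $D_x$ to pass through $\{u^*,v^*\}$. Thus every $x\in S$ yields a cycle of $G^*$ through both $u^*$ and $v^*$. It then remains to argue (i)~that at most two distinct $x$ can produce such a cycle and (ii)~that the resulting $S$ genuinely separates~$G$ into pieces using only $E_A$-edges and only $E_B$-edges respectively.

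The main obstacle is~(i), the bound $|S|\le 2$: ``$D_x$ passes through $\{u^*,v^*\}$'' does not by itself limit the number of~$x$, so one must exploit additional structure. In a plane realisation of $G^*$ the cycle $D_x$ is the boundary of the face of $G^*$ dual to the vertex~$x$, and only two faces of a plane graph can meet both $u^*$ and $v^*$ when $\{u^*,v^*\}$ is a $2$-separator (the two sides of any $u^*$--$v^*$ separating arc). Making this argument rigorous in the infinite, finitely separable setting requires working in the topological space $\tilde{G}^*$ defined in Section~\ref{sec_def}, in particular taking care of dominated ends that are identified with vertices so that ``face boundary'' remains well-defined at infinity. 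Once $|S|\le 2$ is in hand, (ii) follows by noting that any cycle of~$G$ entirely inside $E_A$ (respectively $E_B$) dualises to a tight cut of $G^*$ lying entirely on the corresponding side of~$S^*$, so cycles of $G$ respect the decomposition and $G-S$ falls apart as required.
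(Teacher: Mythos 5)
The paper does not prove this statement at all: it is quoted from \cite[Theorem 4.5]{T-Duality} and used as a black box, so there is no in-paper proof to compare yours against. Judged on its own, your sketch has a genuine gap at exactly the step you yourself single out as the main obstacle, the bound $|S|\le 2$. The claim you offer to close it --- that only two faces of a plane graph can meet both $u^*$ and $v^*$ when $\{u^*,v^*\}$ is a $2$-separator --- is false: in the graph consisting of three internally disjoint $u^*$--$v^*$ paths of length~$2$, the pair $\{u^*,v^*\}$ separates the three middle vertices, yet all three faces meet both $u^*$ and $v^*$. What you actually need is the different statement that at most two faces have boundary edges in both $E_A^*$ and $E_B^*$; that does not follow from ``meeting both $u^*$ and $v^*$'', but requires exhibiting a closed curve through $u^*$ and $v^*$ that separates $A^*$ from $B^*$ in the plane and observing that its two open arcs lie in at most two faces, which are then the only candidates for membership in $S$. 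That Jordan-curve argument is the entire content of the step, and in the infinite, non-locally-finite setting (where $D_x$ may be an infinite circuit passing through ends, and face boundaries can be wild near vertex accumulation points) it is precisely where the work lies; ``taking care of dominated ends'' names the difficulty without resolving it.

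There are further unproved ingredients. First, you use that the tight cut $\delta(x)$ of $G$ dualises to a circuit of $G^*$, i.e.\ that $G$ is in turn a dual of $G^*$; the definition \eqref{eq_Dual} is asymmetric in $G$ and $G^*$, and this symmetry is itself a nontrivial theorem in the infinite setting, as is the fact that $G^*$ is planar with the vertices of $G$ realised as its faces and $D_x$ as the corresponding face boundary --- neither is part of the combinatorial definition of dual used here. Second, step (ii) as stated does not show that $S$ separates $G$: dualising cycles of $G$ to cuts of $G^*$ is not the right mechanism. The correct observation is that two consecutive edges of a path, one in $E_A$ and one in $E_B$, meet in a vertex of $S$, so every path in $G-S$ uses edges of only one type; one must then still verify that vertices incident only with $E_A$-edges and vertices incident only with $E_B$-edges both exist outside $S$, and handle the degeneracies the paper's notion of graph permits (loops, parallel edges, $|V(G)|\le 3$, and the $|S^*|\le 1$ cases you defer). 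None of these obstacles is obviously fatal, but as written the argument is a programme rather than a proof.
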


A \emph{face} of a planar embedding $\varphi\colon G\to\real^2$ is a component of $\real^2\sm \varphi(G)$.
The boundary of a face~$F$ is the set of vertices and edges of~$G$ that are mapped by~$\varphi$ to the closure of~$F$.
A path in~$G$ is \emph{facial} if it is contained in the closure of a face.
The following theorem is due to Whitney \cite[Theorem 11]{whitney_congruent_1932} for finite graphs and due to Imrich~\cite{ImWhi} for infinite ones.

\begin{thm}\cite{ImWhi}\label{thm_Whitney}
Let $G$ be a 3-connected graph embedded in the sphere. Then every automorphism of $G$ maps each facial path to a facial path.

In particular, every automorphism of~$G$ extends to a homeomorphism of the sphere.\qed
\end{thm}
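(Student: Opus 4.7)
The plan is to first give a purely combinatorial characterization of facial cycles in a $3$-connected planar graph, deduce that every automorphism permutes them (and hence sends facial paths to facial paths), and then construct the homeomorphism extending the automorphism face by face.

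For finite $3$-connected planar graphs, Tutte's theorem characterizes the facial cycles as exactly the induced, non-separating cycles. Both properties are visible from $G$ alone, so any automorphism $\alpha$ of $G$ must permute the set of facial cycles; since every facial path lies on the boundary of some face, $\alpha$ maps facial paths to facial paths. For infinite graphs one has to be more careful: face boundaries need not be finite cycles, and ends can accumulate on a face or in its interior. The remedy is to work in the compactification $\tilde G$ from Section~\ref{sec_def}: face boundaries are the circuits of circles in $\tilde G$ whose two sides are each homeomorphic to an open disk. This reformulation depends only on $G$ together with its end structure, both of which are preserved by $\alpha$, so $\alpha$ still permutes facial boundaries.

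Once the combinatorial invariance of faces is secured, constructing the homeomorphism extending $\alpha$ is routine. For each face $F$ with boundary circuit $C$, the image $\alpha(C)$ is the boundary of a unique face $F'$, and by Jordan--Sch\"onflies the closures of $F$ and $F'$ are closed disks. One extends the bijection $C\to\alpha(C)$ induced by $\alpha$ to a homeomorphism of these closed disks and assembles the pieces along shared edges into a global self-homeomorphism of the sphere.

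The main obstacle is the infinite case of the combinatorial characterization, where ends on face boundaries break the naive ``induced non-separating'' formulation. I would handle this by exhausting $G$ by a nested sequence of finite $3$-connected subgraphs, applying Whitney's theorem on each, and passing to a limit using that $\tilde G$ is a well-behaved topological space~-- this is essentially the content of Imrich's extension of Whitney's theorem to infinite graphs.
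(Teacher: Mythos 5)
The paper does not prove this statement at all: it is imported as a known result, due to Whitney for finite graphs and to Imrich for infinite ones, and is stated with an immediate \qed. So the only question is whether your sketch would actually constitute a proof. The finite half is fine: Tutte's characterization of the face boundaries of a $3$-connected planar graph as the induced non-separating (peripheral) cycles is embedding-free, so automorphisms permute face boundaries, every facial path lies on one, and the disk-by-disk Sch\"onflies extension is routine. That is the standard modern argument (Whitney's original one was different, but this route is legitimate).

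The infinite case, which is the actual content of Imrich's theorem, has genuine gaps as you present it. First, your proposed characterization of face boundaries --- ``circles in $\tilde G$ whose two sides are each homeomorphic to an open disk'' --- is not intrinsic: a ``side'' of a circle in the abstract space $\tilde G$ only makes sense once an embedding is fixed, so this cannot be used to conclude that an automorphism of $G$ permutes face boundaries; finding an embedding-free reformulation is precisely the hard step, and you have not supplied one. Second, the theorem carries no local-finiteness or finite-separability hypothesis, and face boundaries of an infinite graph embedded in the sphere need not be circles in $\tilde G$ at all (they can be double rays or far wilder sets), nor need the closure of a face be a closed disk, so the Jordan--Sch\"onflies step fails as stated. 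Third, the limiting argument is unsupported: an infinite $3$-connected graph need not be exhaustible by a nested sequence of finite $3$-connected subgraphs, and even where suitable finite $3$-connected pieces exist, a face of a finite piece typically splits into many faces of $G$, so ``applying Whitney on each and passing to the limit'' does not go through without a substantial additional argument. As it stands, the infinite case of your proposal is a plan rather than a proof.
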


So every planar $3$-connected graph has, basically, a unique planar embedding into the plane.
In the remainder of this paper we always assume this implicitly if we talk about planar embeddings of planar $3$-connected graphs.

Duals of finite graphs, can be easily found using the plane, which we will just recall here, as we will use this well-known fact (see e.g.\ \cite[Section 4.6]{GelbesBuch}).

\begin{rem}\label{rem_GeoAbstrDual}
Let $G$ be a finite planar graph with planar embedding $\varphi\colon G\to\real^2$.
Let $G'$ be the graph whose vertices are the faces of~$\varphi(G)$ and where two vertices are adjacent if their boundaries contain a common edge of~$G$.
Then $G'$ is the dual of~$G$.
\end{rem}

For a planar graph~$G$ with planar embedding $\varphi\colon G\to\real^2$, we call a set $\DF$ of cycles \emph{nested} if for no $C,D\in\DF$ both faces of $\real^2\sm \varphi(C)$ contain vertices of~$D$.
So $\varphi(D)$ lies in one face of~$\real^2\sm \varphi(C)$ together with its boundary~$\varphi(C)$.
We call two cycles $C_1,C_2$ \emph{nested} if $\{C_1,C_2\}$ is nested.

\section{Planar $3$-connected graphs}

Our main tool in the proof of our main theorem will be the following theorem:

\begin{thm}{\cite[Lemma~3.2]{D-Networks}}\label{thm_DD}
If $G$ is a connected graph, then there is a sequence $\EF_1\sub\EF_2\sub\ldots$ of subsets of $\BF(G)$ such that each $\EF_n$ is a canonical nested set of tight cuts of order at most~$n$ that generates~$\BF_n(G)$.\qed
\end{thm}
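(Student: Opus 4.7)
I would prove the theorem by induction on~$n$. For the base case $n=1$, let $\EF_1$ be the set of all bridges of~$G$, which are exactly the tight cuts of order~$1$. This set is invariantly defined from~$G$, hence canonical; two bridges always induce nested bipartitions; and $\EF_1$ generates $\BF_1(G)$ by definition.

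For the inductive step, suppose $\EF_{n-1}$ has been constructed. I would declare two tight cuts $B,B'$ of order~$n$ to be equivalent if $B+B'$ belongs to the span $\langle \EF_{n-1}\rangle$; this partitions the order-$n$ tight cuts into classes that are canonically permuted by $\Aut(G)$. The plan is to choose, for each class, a canonical representative in such a way that the chosen representatives are pairwise nested and nested with every element of~$\EF_{n-1}$. Then setting $\EF_n:=\EF_{n-1}\cup\{\text{chosen representatives}\}$ produces a canonical nested set of tight cuts of order $\le n$, and it generates $\BF_n(G)$, since every tight cut of order $\le n$ differs from the representative of its class by an element of $\langle\EF_{n-1}\rangle\subseteq\langle\EF_n\rangle$.

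The representative-selection relies on the classical submodular inequality
\[
|\delta(X\cap X')|+|\delta(X\cup X')|\le|\delta(X)|+|\delta(X')|
\]
for all $X,X'\subseteq V(G)$, where $\delta(X)$ denotes the cut separating $X$ from $V(G)\setminus X$. It implies that if two tight cuts of order~$n$ coming from bipartitions $\{X,Y\}$ and $\{X',Y'\}$ cross, then at least one corner bipartition such as $\{X\cap X',Y\cup Y'\}$ yields a cut of order at most~$n$, and that corner cut lies in the same equivalence class as one of the originals modulo $\langle\EF_{n-1}\rangle$. Iterating inside each class should extract a canonical family of ``maximally uncrossed'' members, from which a single representative is picked by an invariantly defined minimality rule on the vertex-set side.

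The main obstacle is to arrange the selection so that (i) it genuinely commutes with every isomorphism of~$G$, and (ii) representatives from distinct classes do not cross each other or any element of~$\EF_{n-1}$. Both properties are expected to follow from the minimality of the chosen representatives: any crossing between two of them would, via submodularity, yield a strictly smaller element in one of the two equivalence classes, contradicting minimality. The delicate point is to formulate the underlying ordering on cuts so that this extremal argument goes through while never introducing an arbitrary choice, thereby securing canonicity; this is the step for which one really needs the refined argument of Dunwoody rather than the original Dicks--Dunwoody uncrossing.
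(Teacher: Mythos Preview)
The paper does not prove this theorem at all: it is quoted verbatim from Dunwoody's paper \cite{D-Networks} (as Lemma~3.2 there) and closed with a \qedsymbol. Consequently there is no ``paper's own proof'' to compare your attempt against; the result is used as a black box in the proof of Theorem~\ref{thm_main2}.

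As for the content of your sketch, it is broadly in the spirit of the Dicks--Dunwoody uncrossing method and of Dunwoody's refinement, but it is not a proof. The decisive step --- selecting, within each equivalence class modulo $\langle\EF_{n-1}\rangle$, a \emph{canonical} representative that is simultaneously nested with all other representatives and with every member of $\EF_{n-1}$ --- is exactly the hard part, and you explicitly defer it (``Iterating \ldots\ should extract a canonical family'', ``this is the step for which one really needs the refined argument of Dunwoody''). The submodularity inequality tells you that a corner of two crossing order-$n$ cuts has order at most~$n$, but it does not by itself give a well-founded, choice-free descent: one must specify the ordering, show it terminates (nontrivial in infinite graphs), and verify that the outcome is preserved by every isomorphism, including those that swap the two sides of a cut. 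Until that is spelled out, what you have is an outline of the known strategy rather than an independent argument.
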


Basically, our proof goes as follows.
We consider the dual graph $G^*$ of~$G$ and apply Theorem~\ref{thm_DD} to~$G^*$ to find canonical nested sets of cuts.
These sets then will define us our sequence of canonical nested sets of cycles in~$G$.
In preparation of that proof, we shall prove a proposition first.

\begin{prop}\label{prop_NestedInDual}
Let $G$ be a planar finitely separable $3$-connected graph and let $G^*$ be its dual.
If~$C_1$ and~$C_2$ are cycles in~$G$ such that the cuts $C_i^*:=\{e^*\mid e\in C_i\}$ are nested, then $C_1$ and $C_2$ are nested.
\end{prop}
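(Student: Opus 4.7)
The plan is to translate the abstract cycle-cut duality into the geometric picture supplied by Theorem~\ref{thm_UniqueDual}: we identify the vertices of $G^*$ with the faces of the planar embedding $\varphi$ of $G$, and then argue that, for any cycle $C$ of $G$, the two sides of the tight cut $C^*$ are exactly the two sets of faces lying on either side of the Jordan curve $\varphi(C)$ in $\mathbb R^2$ (or in $S^2$). Once this identification is in place, nestedness in the dual translates directly into one side of $\varphi(C_1)$ being contained in one side of $\varphi(C_2)$, and a vertex of $C_2$ lying on the ``wrong'' side of $C_1$ is easily ruled out.

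First, I record the geometric description of $G^*$. For a cycle $C$ in $G$, $\varphi(C)$ is a simple closed curve, so by the Jordan curve theorem it splits $\mathbb R^2$ into two open regions $U_C$ and $V_C$; each face of $\varphi(G)$, being connected and disjoint from $\varphi(C)$, lies entirely in one of them. An edge $e\notin C$ has its interior in $U_C$ or in $V_C$, so its two incident faces lie on the same side of $\varphi(C)$; an edge $e\in C$ has one incident face in $U_C$ and one in $V_C$. Hence the edges of $C^*$ are exactly the edges of $G^*$ joining a face in $U_C$ to a face in $V_C$, and since $C^*$ is a tight cut by~\eqref{eq_Dual}, the two components of $G^*-C^*$ are precisely $A_C:=\{F:F\sub U_C\}$ and $B_C:=\{F:F\sub V_C\}$. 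Writing $A_i,B_i$ for the sides of $C_i^*$ in this way, the nestedness hypothesis gives, after swapping $A_i\leftrightarrow B_i$ if necessary, $A_1\sub A_2$; i.e.\ every face in $U_1$ lies in $U_2$.

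The finishing step is then a one-line contradiction: suppose some $v\in V(C_2)$ lies in the open region $U_1$. A small enough neighbourhood of $v$ is contained in $U_1$, so every face of $\varphi(G)$ incident with $v$ lies in $A_1\sub A_2$, i.e.\ in $U_2$. But $v\in V(C_2)$, and the two edges of $C_2$ incident with $v$ locally separate a neighbourhood of $v$ into two parts, one meeting a face on each side of $\varphi(C_2)$, so some face incident with $v$ must lie in $V_2$ --- a contradiction. Hence $U_1$ contains no vertex of $C_2$, which is exactly the condition for $C_1$ and $C_2$ to be nested in~$G$.

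The main obstacle is not this combinatorial punchline but justifying the geometric description of $G^*$ and, in particular, of the two sides of the cut $C^*$, in the infinite finitely separable setting: one needs that $V(G^*)$ can be canonically identified with the faces of $\varphi(G)$ (the natural extension of Remark~\ref{rem_GeoAbstrDual} provided by the Bruhn--Diestel construction behind Theorem~\ref{thm_UniqueDual}), and that a face of $\varphi(G)$, though possibly unbounded, still lies on a single side of the finite Jordan curve~$\varphi(C)$. Both facts follow from the Bruhn--Diestel construction together with a standard connectedness argument, after which the proof above goes through unchanged.
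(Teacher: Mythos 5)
Your argument is correct, and it reaches the conclusion by a genuinely different route from the paper. The paper argues contrapositively and reduces everything to a \emph{finite} subgraph: it picks a finite connected $H\sub G^*$ containing $C_1^*$ and $C_2^*$ as tight cuts, passes to $H^*$, and then only ever needs the finite geometric duality of Remark~\ref{rem_GeoAbstrDual} to see that a crossing of $\varphi(C_1)$ and $\varphi(C_2)$ forces $C_1^*$ to separate edges of $C_2^*$. You instead work directly with the infinite dual, identifying $V(G^*)$ with the faces of $\varphi(G)$ and the two sides of the tight cut $C^*$ with the faces inside and outside the Jordan curve $\varphi(C)$; once that identification is granted, your local argument at a vertex $v$ of $C_2$ (and, by the symmetry $A_1\sub A_2\Leftrightarrow B_2\sub B_1$, the half of the nestedness condition you leave implicit) is sound. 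The trade-off is clear: your proof is more transparent geometrically and proves the contrapositive-free implication directly, but the step you defer to ``the Bruhn--Diestel construction'' is exactly the nontrivial content in the infinite, finitely separable (possibly non-locally-finite) setting --- that the abstract dual of~(\ref{eq_Dual}) really is the face graph of the embedding and that the bipartition of the tight cut $C^*$ is the inside/outside partition of the faces. That is true and extractable from the existence proof behind Theorem~\ref{thm_UniqueDual}, but it is not something the paper ever states, and the paper's finite reduction is designed precisely to avoid opening that box. If you keep your route, isolate the face-identification as an explicit lemma with a precise reference (and note that both open sides of $\varphi(C)$ do contain faces, which needs a word when $\varphi(G)$ has accumulation points); otherwise the finite-subgraph reduction is the cheaper way to make the argument airtight.
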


\begin{proof}
Let $\varphi\colon G\to\real^2$ be a planar embedding of~$G$.
Let us suppose that $C_1$ and~$C_2$ are not nested.
Let $H\sub G^*$ be a finite connected subgraph that contains $C_1^*$ and~$C_2^*$ as tight cuts.
Then $H^*$ contains the two cycles $C_1$ and~$C_2$.
For $i=1,2$, let $A_i$ be the unbounded face of $\real^2\sm\varphi(C_i)$ and let $B_i$ be its bounded face.
As $C_1$ and~$C_2$ are not nested, each of $A_1$ and $B_1$ contains edges from~$C_2$.
Due to Remark~\ref{rem_GeoAbstrDual}, the tight cut $C_1^*$ separates edges from $C_2^*$ in~$H$.
So each of the two components of $G^*-C_1^*$ contains vertices of each of the two components of $G^*-C_2^*$.
Thus, $C_1^*$ and $C_2^*$ are not nested in~$H$ and hence neither in~$G^*$.
\end{proof}

Now we are able to prove Theorem~\ref{thm_main2}, a sharpend version of Theorem~\ref{thm_main}.

\begin{thm}\label{thm_main2}
For every planar finitely separable $3$-connected graph $G$ and all $n\in\nat$ there exists a canonical nested set $\DF_n$ of cycles of length at most~$n$ such that $\DF_n$ generates $\CF_n(G)$.

In addition, we may choose the sets $\DF_n$ such that $\DF_{n-1}\sub\DF_n$.
\end{thm}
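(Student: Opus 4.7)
The plan is to transfer Dunwoody's canonical nested sets of tight cuts from the dual graph $G^*$ back to $G$ via the duality bijection ${}^*\colon E(G)\to E(G^*)$. By Theorem~\ref{thm_UniqueDual}, $G$ has a unique dual $G^*$, which by the theorem of Thomassen quoted above is again $3$-connected, so in particular connected. Hence Theorem~\ref{thm_DD} applied to $G^*$ yields a chain $\EF_1\sub\EF_2\sub\ldots$ of subsets of $\BF(G^*)$ such that each $\EF_n$ is a canonical nested set of tight cuts of $G^*$ of size at most~$n$ generating $\BF_n(G^*)$.

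For each $n$, I would define
\[
\DF_n := \{C \text{ cycle in } G \mid C^* \in \EF_n\},\qquad\text{where } C^* := \{e^*\mid e\in C\}.
\]
By~(\ref{eq_Dual}), the edge-bijection ${}^*$ identifies circuits of $G$ with tight non-empty cuts of $G^*$, so each element of $\EF_n$ is the image under ${}^*$ of the circuit of a unique cycle in $G$, and $\DF_n$ consists of cycles of length at most~$n$ (since ${}^*$ preserves edge cardinalities). The inclusion $\DF_{n-1}\sub\DF_n$ is then immediate from $\EF_{n-1}\sub\EF_n$, and nestedness of $\DF_n$ in the planar sense follows directly from Proposition~\ref{prop_NestedInDual}: if $C_1^*,C_2^*\in\EF_n$ are nested in $G^*$, then $C_1,C_2$ are nested in $G$.

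To see that $\DF_n$ generates $\CF_n(G)$, I would exploit that ${}^*$ extends to a linear bijection between the edge spaces $\mathbb{F}_2^{E(G)}$ and $\mathbb{F}_2^{E(G^*)}$; by~(\ref{eq_Dual}) it sends the circuit-generated subspace $\CF(G)$ onto the tight-cut-generated subspace $\BF(G^*)$, restricting to an isomorphism $\CF_n(G)\to\BF_n(G^*)$ because cardinality is preserved. Since $\EF_n$ generates $\BF_n(G^*)$, pulling the generating relations back through ${}^*$ shows that $\DF_n$ generates $\CF_n(G)$.

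The step I expect to be the delicate one is verifying canonicity of $\DF_n$, i.e.\ that the construction $G\mapsto\DF_n$ commutes with graph isomorphisms. Given an isomorphism $\varphi\colon G\to G'$ between planar finitely separable $3$-connected graphs, I would argue that $\varphi$ induces a unique isomorphism $\varphi^*\colon G^*\to (G')^*$: applying $\varphi$ to the edge sets and then composing with the two duality bijections yields a bijection $E(G^*)\to E((G')^*)$ that, by~(\ref{eq_Dual}) applied to both $G$ and $G'$, sends tight non-empty cuts to tight non-empty cuts; since tight cuts of size~$2$ in a $3$-connected graph determine the edges between any two vertices, this edge-bijection is induced by a graph isomorphism, and uniqueness of the dual (Theorem~\ref{thm_UniqueDual}) forces this isomorphism to be canonical in $\varphi$. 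Canonicity of $\EF_n$ in $G^*$ then transports through ${}^*$ to canonicity of $\DF_n$ in $G$, completing the proof.
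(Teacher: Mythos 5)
Your proposal is correct and takes essentially the same route as the paper's own proof: apply Theorem~\ref{thm_DD} to the unique, $3$-connected dual $G^*$ and pull the canonical nested sets of tight cuts back through the duality bijection, with Proposition~\ref{prop_NestedInDual} supplying nestedness. The only difference is that you spell out the canonicity and generation steps in more detail than the paper, which simply asserts that dualising commutes with isomorphisms for $3$-connected graphs and sums the dual sets of the summands of $C^*$.
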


Note that Theorem~\ref{thm_main} is a direct consequence of Theorem~\ref{thm_main2}: just take the set $\bigcup_{n\in\nat}\DF_n$.
Obviously, this is canonical and nested and it generates the whole cycle space of~$G$.

\begin{proof}[Proof of Theorem~\ref{thm_main2}]
Let $G$ be a planar finitely separable $3$-connected graph with planar embedding $\varphi\colon G\to\real^2$.
Due to Theorem~\ref{thm_UniqueDual}, it has a unique dual~$G^*$, which is $3$-connected due to Thomassen~\cite[Theorem~4.5]{T-Duality}.
So we can apply Theorem~\ref{thm_DD} and obtain a sequence $\EF_1\sub\EF_2\sub\ldots$ of subsets of $\BF(G^*)$ such that each $\EF_n$ is a canonical nested set of tight cuts of order at most~$n$ that generates $\BF_n(G^*)$.

We shall prove that the sequence of the sets
\[
\DF_n:=\{B^*\mid B\in\EF_n\}
\]
with $B^*:=\{e^*\mid e\in B\}$ satisfies the assertion.
Note that $\DF_n$ is a set of cycles of length at most~$n$ as $\EF_n$ is a set of finite tight cuts of size at most~$n$.
The set $\DF_n$ is nested due to Proposition~\ref{prop_NestedInDual} and it is canonical, as $\EF_n$ is canonical and mapping this into the edge set of the duel $G^*$ commutes with graph isomorphisms (for $3$-connected graphs).
As $\EF_n$ generates $\BF_n(G^*)$, the set $\DF_n$ generates $\CF_n(G)$:
it suffices to show that every cycle $C$ of length at most~$n$ is generated by $\DF_n$, but as the tight cut $C^*$ is the sum of finitely many elements of~$\EF_n$, the dual sets in~$E(G)$ of those summands sum to~$C$.
This finishes the proof.
\end{proof}

\section{Planar graphs of small connectivity}\label{sec_LowerCon}

Tutte~\cite{Tutte} proved for finite graphs that $2$-connected graphs can be decomposed into `$3$-connected parts' and cycles.
Later, his result was extended by Droms et al.~\cite{DSS-Tutte} to locally finite graphs and by Richter~\cite{R-Tutte} to graphs of arbitrary degree.
In order to state this result formally, we need some definitions.

Let $G$ be a graph.
A \emph{\td} of~$G$ is a pair $(T,\cV)$ of a tree $T$ and a family $\cV=(V_t)_{t\in T}$ of vertex sets $V_t\sub V(G)$, one for each vertex of~$T$, such that
\begin{enumerate}[(T1)]
\item $V = \bigcup_{t\in T}V_t$;
\item for every $e\in E(G)$ there exists some $t\in V(T)$ such that $V_t$ contains both end vertices of~$e$;
\item $V_{t_1} \cap V_{t_3} \sub V_{t_2}$ whenever $t_2$ lies on the $t_1$--$t_3$ path in~$T$.
\end{enumerate}

The sets $V_t$ are the \emph{parts} of the \td\ $(T,\cV)$ and we also call the graph induced by $V_t$ a \emph{part} of~$(T,\cV)$.
The intersections $V_{t_1}\cap V_{t_2}$ for edges $t_1t_2$ of~$T$ are its \emph{adhesion sets} and the maximum size of the adhesion sets is the \emph{adhesion} of $(T,\cV)$.
For a part $V_t$, its \emph{torso} is the graph whose vertex set is $V_t$ and whose edge set is
\[
\{xy\in E(G)\mid x,y\in V_t\}\ \cup\ \{xy\mid \{x,y\}\sub V_t\text{ is an adhesion set}\}.
\]

There are several ways of constructing \td s.
If its construction commutes with graph isomorphisms\footnote{Note that graph isomorphisms induce isomorphisms between \td s in the following sense: every graph isomorphism $\varphi\colon G\to G'$ induces a canonical bijection between the vertex sets of~$G$ and~$G'$ and for every \td\ $(T,\cV)$ of~$G$, there is a \td\ $(T,\cV')$ of~$G'$ with $\cV'=\{\varphi(V_t)\mid V_t\in\cV\}$.} we call the \td\ \emph{canonical}.

\begin{thm}\cite{R-Tutte}\label{thm_tutte}
Every $2$-connected graph $G$ has a canonical \td\ of adhesion~$2$ each of whose torsos is either $3$-connected or a cycle or a complete graph on two vertices.
\qed
\end{thm}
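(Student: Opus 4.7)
The plan is to adapt Tutte's classical decomposition of finite $2$-connected graphs to the infinite setting, following the same skeleton used by Droms et al.\ in the locally finite case. The central objects are the $2$-separators $\{u,v\}\sub V(G)$: each such separator partitions $V(G)\sm\{u,v\}$ into the components of $G-\{u,v\}$. Given the $2$-separators, the torsos will be obtained as the maximal ``atomic'' pieces, and the tree structure will come from nestedness relations between the separators.

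First I would introduce an equivalence relation on $E(G)$: two edges $e,f$ are equivalent if no $2$-separator $\{u,v\}$ places them into different components of $G-\{u,v\}$ (where an edge incident to $\{u,v\}$ is assigned to the side containing its other endpoint, and the edge $uv$ itself, if present, is treated as its own class). This relation is manifestly canonical, since it is defined purely from the abstract graph. The equivalence classes, enlarged by the endpoints of their edges, will be the candidate parts $V_t$. Next I would call two $2$-separators $\{u_1,v_1\}$ and $\{u_2,v_2\}$ \emph{crossing} if each has vertices in at least two components of $G$ minus the other, and prove a nestedness lemma: after discarding the $2$-separators that are ``internal'' to a single class (i.e.\ do not actually split it into two nonempty pieces), the remaining ones form a nested system. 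The tree $T$ is then the incidence structure whose nodes are the equivalence classes and whose edges correspond to the surviving nested $2$-separators.

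To verify (T1)--(T3) and that $T$ is a tree (not a forest or containing cycles), I would use $2$-connectivity: any two vertices lie in some common part, $G-\{u,v\}$ has exactly two sides for any essential $2$-separator, and the nestedness of the selected separators prevents cycles in $T$. For the classification of torsos, each torso is by construction $3$-connected or has no further internal $2$-separator that splits it nontrivially; a routine case analysis then shows that a torso which is not $3$-connected and has at least three vertices must be a cycle (all its internal $2$-separators being ``degenerate'' in a serial sense), while torsos on two vertices are copies of $K_2$ (representing parallel bonds). Canonicity is inherited throughout, since every ingredient — the set of $2$-separators, the equivalence on edges, the nestedness selection, and the assembly of $T$ — depends only on the isomorphism type of $G$. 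The main obstacle I anticipate is the nestedness reduction step in the infinite setting: crossing pairs of $2$-separators in a finite graph can be ``uncrossed'' by standard submodularity arguments, but for infinite $G$ one must handle possible accumulations of $2$-separators carefully, using finite subgraphs and a compactness or limit argument (as in Richter's paper) to guarantee that the canonical choice of nested separators still yields a bona fide tree-decomposition with adhesion exactly~$2$.
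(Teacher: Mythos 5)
First, note that the paper does not prove this statement at all: it is quoted from Richter~\cite{R-Tutte} (building on Tutte for finite graphs and Droms et al.\ for locally finite ones) and used as a black box, so there is no in-paper argument to compare yours against; your proposal has to stand on its own.

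It does not, because the equivalence relation at its foundation is too fine. The relation ``$e\sim f$ iff no $2$-separator places $e$ and $f$ in different components'' is indeed canonical and transitive, but it cannot produce the cycle torsos that the theorem explicitly allows. In a cycle $C_n$ with $n\ge 4$ (or, more relevantly, in any $2$-connected graph whose $3$-block structure contains a ``series'' composition of pieces), every two edges are separated by some $2$-separator --- for adjacent edges take a separator through their common vertex, for non-adjacent ones take one vertex from each arc between them --- so every equivalence class is a single edge. The decomposition is required to return the whole cycle as one part, and no canonical tree on the single-edge classes can satisfy (T3), since their natural incidence structure is itself a cycle. Your nestedness lemma fails in exactly the same situation: the $2$-separators inside a cycle pairwise cross, and none of them is ``internal to a single class'' under your definitions, so none gets discarded. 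The actual content of Tutte's construction (and of Richter's extension) is the step you are missing: one keeps only the $2$-separations that are nested with \emph{all} other $2$-separations, or equivalently coarsens the edge relation so that edges linked by a maximal chain of separators in series are identified; this is precisely what makes the polygon torsos appear. Two further points: when a $2$-separator has three or more sides (as in $K_{2,3}$), the incidence structure on the classes is a clique rather than a tree, so a hub node whose part is the separator itself must be inserted --- this is where the $K_2$ torsos come from; and the passage to arbitrary infinite graphs, which you defer to ``a compactness or limit argument,'' is the genuinely hard part of Richter's paper, since vertices of infinite degree defeat naive compactness over finite subgraphs.
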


We call a \td\ as in Theorem~\ref{thm_tutte} a \emph{Tutte decomposition}.
Note that there may be more than one Tutte decomposition even tough it is a canonical \td.

\begin{thm}\label{thm_mainCon2}
Let $G$ be a planar finitely separable $2$-connected graph.
If~$G$ has a Tutte decomposition $(T,\cV)$ whose adhesion sets of $(T,\cV)$ are complete graphs, then $G$ has a canonical nested set of cycles that generates its cycles space.

In addition, we may choose the generating cycles so that each of them lies in a unique part of $(T,\cV)$.
\end{thm}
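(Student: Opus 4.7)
The plan is to apply Theorem~\ref{thm_main} to each $3$-connected part of a canonical Tutte decomposition and glue the results along the adhesion edges. Fix a canonical Tutte decomposition $(T,\cV)$ of $G$ as in Theorem~\ref{thm_tutte}. By hypothesis every adhesion set is complete, so every virtual edge of every torso $G_t$ is already an edge of $G$; hence each $G_t$ is a subgraph of $G$ that inherits planarity (from $\varphi$) and finite separability. For each $t$ with $G_t$ being $3$-connected, apply Theorem~\ref{thm_main} to $G_t$ to obtain a canonical nested set $\DF_t$ of cycles generating $\CF(G_t)$; if $G_t$ is itself a cycle set $\DF_t:=\{G_t\}$, and if $G_t\isom K_2$ set $\DF_t:=\emptyset$. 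Put $\DF:=\bigcup_{t\in T}\DF_t$. Canonicity of $\DF$ is immediate from canonicity of $(T,\cV)$ and of each $\DF_t$.

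Generation is obtained by a standard decomposition along $T$: I would prove, by induction on the number of parts that a cycle $C$ of $G$ meets nontrivially, that $C$ lies in the span of $\DF$. In the inductive step, choose a leaf $t$ of the minimal subtree of $T$ supporting $C$; then $C\cap G_t$ is a $u$--$v$ path $P$ for the unique adhesion set $\{u,v\}$ through which $C$ enters $V_t$, and since $uv\in E(G)$, $P+uv$ is a cycle in $G_t$ while $C+(P+uv)$ is a cycle supported on strictly fewer parts, to which the induction hypothesis applies.

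For nestedness, two cycles in a common $\DF_t$ are nested by construction in the planar embedding of $G_t$; since $G_t$ is $3$-connected (or itself a cycle), Theorem~\ref{thm_Whitney} ensures that its planar embedding is essentially unique, so the embedding of $G_t$ inherited from $\varphi$ agrees with the one underlying the construction of $\DF_t$, and the nestedness transfers back to $\varphi$. For $C_i\in\DF_{t_i}$ with $t_1\ne t_2$, I would choose an edge of the $t_1$--$t_2$ path in $T$ whose adhesion set $\{u,v\}$ $2$-separates $V_{t_1}$ from $V_{t_2}$ in $G$, and use that $uv\in E(G)$ together with the $2$-connectedness of $G$ to close $\varphi(uv)$ through a suitable face of $\varphi(G)$ to a Jordan curve in $\real^2$ realising the separation. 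This places $V(C_1)$ and $V(C_2)$ in distinct components of the complement, so $V(C_2)$ lies in a single face of $\varphi(C_1)$.

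The additional uniqueness claim follows from condition~(T3): if a cycle $C$ of length at least three satisfied $V(C)\sub V_{t_1}\cap V_{t_2}$ for some $t_1\ne t_2$, then $V(C)$ would be contained in every adhesion set along the $t_1$--$t_2$ path in $T$, forcing $|V(C)|\le 2$; the remaining short cycles (loops and digons) can be handled by a direct canonical rule. The main obstacle in this plan is the across-parts nestedness step, which requires a careful topological argument that in a planar embedding of a $2$-connected graph, the edge joining the vertices of a $2$-separator can always be completed through a face of the embedding to a Jordan curve realising that $2$-separation.
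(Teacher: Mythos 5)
Your overall architecture matches the paper's: since the adhesion sets are complete, each torso is a subgraph of $G$ that is a cycle, a $K_2$, or $3$-connected; you take the cycle-parts themselves as generators, apply Theorem~\ref{thm_main} to the $3$-connected parts, and generate an arbitrary cycle by splitting it along an adhesion set $\{u,v\}$ using the edge $uv$ (the paper inducts on the length of $C$ rather than on the number of parts it meets, but both inductions work). Canonicity and the ``unique part'' claim are handled essentially as in the paper.

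The genuine gap is exactly the step you flag yourself: nestedness of cycles lying in distinct parts. Your proposed fix --- close $\varphi(uv)$ through a face of $\varphi(G)$ to a Jordan curve realising the $2$-separation --- does not work as stated. If several bridges of $G-\{u,v\}$ attach at $\{u,v\}$ (picture the edge $uv$ together with three internally disjoint $u$--$v$ paths $P_1,P_2,P_3$ drawn as nested arcs), then $\varphi(uv)$ together with an arc in either face \emph{incident with the edge $uv$} bounds a region containing either none or all of the other bridges, so it does not separate $P_1$ from $P_2$. One must instead route the arc through a face lying \emph{between} the two bridges that contain $C_1$ and $C_2$ in the cyclic arrangement around $\{u,v\}$, which requires proving that such a face exists and has both $u$ and $v$ on its boundary --- delicate in an infinite planar graph. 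The paper avoids all of this with a short combinatorial argument: if $C_1$ and $C_2$ cross, then $C_2$ meets both faces of $\real^2\sm\varphi(C_1)$, so the two cycles share at least two vertices; since distinct parts meet in at most an adhesion set, they share \emph{exactly} the two vertices $x,y$ of one adhesion set. If one part is a cycle, then that cycle contains the edge $xy$ and otherwise avoids the other cycle, so it cannot meet both of its faces. If both parts are $3$-connected, pick vertices of $C_1$ in the two faces of $\varphi(C_2)$ and take a third path between them in its part, internally disjoint from the two arcs of $C_1$ (which use up $x$ and $y$); this path must cross $\varphi(C_2)$ at a third common vertex of the two parts, a contradiction. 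You should replace your Jordan-curve step with an argument of this kind (or supply the full topological lemma about faces between bridges); as it stands, the key claim your nestedness proof rests on is false in the form you state it.
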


\begin{proof}
Due to the assumption on the adhesion sets, we know that not only the torsos but also the parts theirselves are either cycles or $3$-connected.
First, we show:
\begin{equation}\label{eq_mainCon2_1}
\begin{minipage}[t]{0.9\textwidth}
\em
Cycles of distinct parts are nested.
\end{minipage}
\end{equation}
Let $\varphi$ be the planar embedding of~$G$ into~$\real^2$ and let $C_1,C_2$ be cycles in distinct parts.
If $C_1$ and $C_2$ are not nested, then both faces of $\real^2\sm \varphi(C_1)$ contain vertices of~$C_2$ and thus $C_1$ and $C_2$ have two common vertices.
As they lie in distinct parts and the intersection of distinct parts lies in some adhesion set, which has size~$2$, the cycles $C_1$ and~$C_2$ have precisely two common vertices $x,y$ and these form an adhesion set of $(T,\cV)$.
Let $V_i$ be the part that contains~$C_i$ for $i=1,2$.
If one of the parts, say $V_1$, is a cycle, then $C_1$ contains the edge $xy$.
As it contains no other vertex of~$C_2$, it cannot contains vertices of both faces of $\real^2\sm \varphi(C_2)$.
Thus, each of the two parts $V_i$ is $3$-connected.
In particular, we find in~$V_1$ a third path~$P$ from every vertex of the bounded face of~$\real^2\sm \varphi(C_2)$ to every vertex of the unbounded face. But then $\varphi(P)$ crosses~$\varphi(C_2)$, so there is a third vertex in $V_1\cap V_2$.
This contradiction shows~(\ref{eq_mainCon2_1}).

Since $(T,\cV)$ is canonical, parts that are cycles are mapped to parts that are cycles.
So if we take the set $\DF_1$ of cycles that appear as a part in $(T,\cV)$, then this set is canonical and nested due to~(\ref{eq_mainCon2_1}).
Let $\HF$ be the set of $3$-connected parts of $(T,\cV)$.
For every $H\in\HF$ we find, by Theorem~\ref{thm_main}, a canonical nested set $\DF_H$ of cycles generating the cycles space of~$H$.
We may also assume that for $H\in\HF$ and $\alpha\in\Aut(G)$, we have $\alpha(\DF_H)=\DF_{\alpha(H)}$.

As $(T,\cV)$ is canonical, the set
\[
\DF_2:=\bigcup_{H\in\HF}\DF_H
\]
is canonical and it is nested by~(\ref{eq_mainCon2_1}).
We claim that $\DF:=\DF_1\cup\DF_2$ satisfies the assertion.
The additional part is obvious by definition of~$\DF$.
As~$\DF$ is nested and canonical, it remains to prove that it generates the cycle space of~$G$.
So let $C$ be any element of the cycles space of~$G$.
It suffices to consider the case that $C$ is a cycle, as all cycles generate the cycle space.
If $C$ lies in a part of $(T,\cV)$, then it is generated by~$\DF$ by defintion of~$\DF$.
If $C$ does not lie in any part of~$(T,\cV)$, then there is some adhesion set $\{x,y\}$ of $(T,\cV)$ that disconnects~$C$.
Let $P_1, P_2$ be the two distinct $x$--$y$ paths on~$C$.
Then $C$ is the sum of $P_1+yx$ and $P_2+yx$.
By induction on the length of~$C$, the cycles $P_1+yx$ and $P_2+yx$ are generated by~$\DF$ and so is~$C$.
\end{proof}

Note that we can embed any planar $2$-connected graph $G$ in a planar $2$-connected graph $G'$ such that $G'$ is obtained from~$G$ by making the adhesion sets of a Tutte decomposition $(T,\cV)$ of~$G$ complete.
This is always possible as any adhesion set separates the graph, so lies on the boundary of some face and disconnects this face boundary; thus, it is not possible that two new such edges `cross'\footnote{This shall mean that adding one of those edges results in a planar graph where the end vertices of the second edge do not lie on any common face boundary.} since no adhesion set of any \td\ can separate any other of its adhesion sets.

As each cycle of a graph lies in a unique maximal $2$-connected subgraph, an immediate consequence of Theorem~\ref{thm_mainCon2} using the well-known block-cutvertex tree and the fact that cycles in distinct components are disjoint is the following:

\begin{thm}\label{thm_mainCon1}
Let $G$ be a planar finitely separable graph such that for each of its maximal $2$-connected subgraphs the adhesion sets of a Tutte decomposition are complete graphs on two vertices.
Then $G$ has a canonical nested set of cycles that generates its cycles space.\qed
\end{thm}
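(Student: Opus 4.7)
The plan is to reduce to Theorem~\ref{thm_mainCon2} by decomposing $G$ along its block-cutvertex structure. I would begin by recalling the elementary fact that every cycle of a graph lies in a unique maximal $2$-connected subgraph (``block'') of $G$, so that $\CF(G)$ decomposes as the sum $\sum_{B}\CF(B)$ ranging over the blocks $B$ of $G$.

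Next, for every block $B$, the hypothesis of the theorem states exactly that a Tutte decomposition of $B$ has complete adhesion sets, so Theorem~\ref{thm_mainCon2} applies to $B$ and yields a canonical nested set $\DF_B$ of cycles generating $\CF(B)$. As in the proof of Theorem~\ref{thm_mainCon2}, I would make the selection equivariantly, i.e.\ so that $\alpha(\DF_B)=\DF_{\alpha(B)}$ for every $\alpha\in\Aut(G)$; this is possible because the construction of $\DF_B$ depends only on the isomorphism type of $B$ together with its planar embedding, which is unique on $3$-connected torsos by Theorem~\ref{thm_Whitney}. Setting $\DF:=\bigcup_B\DF_B$, canonicity is immediate (both the block decomposition and each $\DF_B$ commute with graph isomorphisms), and $\DF$ generates $\CF(G)$ by the displayed decomposition.

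The only point that requires real verification is that $\DF$ is nested. Cycles lying in the same block are nested by Theorem~\ref{thm_mainCon2}. For $C_1\in\DF_{B_1}$ and $C_2\in\DF_{B_2}$ with $B_1\neq B_2$, the intersection $V(B_1)\cap V(B_2)$ has at most one element (empty when $B_1,B_2$ lie in different components, a cutvertex otherwise), so the same holds for $V(C_1)\cap V(C_2)$. Hence, letting $v$ denote this possible common vertex, $\varphi(C_2)\sm\{\varphi(v)\}$ is a connected subset of $\real^2\sm\varphi(C_1)$ and therefore lies in a single face of $\real^2\sm\varphi(C_1)$; in particular, no two vertices of $C_2$ are separated by $C_1$.

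I do not expect any serious obstacle: once Theorem~\ref{thm_mainCon2} is in hand, the reduction via blocks is standard, and the only non-routine ingredient is the small planar-topology observation above, namely that two cycles in a planar embedding that meet in at most one vertex cannot cross.
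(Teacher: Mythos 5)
Your proposal follows the same route as the paper, which derives this result in one line from Theorem~\ref{thm_mainCon2} via the block-cutvertex tree and the fact that each cycle lies in a unique block; your extra verification that two cycles meeting in at most one vertex (a cutvertex) cannot cross is exactly the small observation the paper leaves implicit. The argument is correct.
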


\section*{Acknowledgement}

I thank M.J.~Dunwoody for showing me~\cite{D-Networks} which led to a sharpening of our previous main result.


\bibliographystyle{amsplain}
\bibliography{Bibs}

\end{document}